\newcommand{\tomemail}{\href{mailto:tom.bachmann@zoho.com}{tom.bachmann@zoho.com}}
\ifdefvoid{\longdocument}{
\newtheorem*{defn}{Definition}
\newtheorem{prop}{Proposition}
}{
\newtheorem{prop}{Proposition}[chapter]
\newtheorem{defn}[prop]{Definition}
}
\newtheorem{corr}[prop]{Corollary}
\newtheorem{lemm}[prop]{Lemma}
\newtheorem{thm}[prop]{Theorem}
\newtheorem*{thm*}{Theorem}
\newtheorem*{corr*}{Corollary}
\newtheorem*{prop*}{Proposition}
\newtheorem*{lemm*}{Lemma}
\theoremstyle{definition}
\theoremstyle{remark}
\newtheorem{rmk}[prop]{Remark}
\newcommand{\Aone}{{\mathbb{A}^1}}
\newcommand{\Gm}{{\mathbb{G}_m}}
\newcommand{\Hom}{\operatorname{Hom}}
\DeclareRobustCommand{\ul}{\underline}
\newcommand{\et}{{\acute{e}t}}
\newcommand{\iso}{\cong}
\title{$\Aone$-invariants in Galois cohomology and a claim of Morel}
\author{Tom Bachmann \\ \tomemail}
\let\cat=\mathrm
\def\Fin{\cat F\mathrm{in}}
\let\bb=\mathbb
\def\A{\bb A}
\begin{document}

\maketitle
\begin{abstract}
We establish variants of the results in \cite{garibaldi2003cohomological} for
invariants taking values in a strictly homotopy invariant sheaf. As
an application, we prove the folklore result of Morel that $\ul{\pi}_0^{\Aone} (B_\et
\Fin^{bij})^+ = \ul{GW}$.
\end{abstract}

\section{$\Aone$-invariants of Étale Algebras}
This entire section is basically a minor variation of arguments from
\cite{garibaldi2003cohomological}. Related results were also obtained by Morel
(unpublished) and Hirsch \cite[Theorem 2.3.12]{hirsch2018cohomological}.

Throughout, we fix a field $k$. Let $Sm(k)$ denote the category of
smooth $k$-schemes and $Pre(Sm(k))$ the category of presheaves on $Sm(k)$. As
usual, if $F \in Pre(Sm(k))$ and $X$ is an essentially smooth $k$-scheme, then
$F(X)$ makes sense.

\begin{defn}
We call $F \in Pre(Sm(k))$ \emph{homotopy invariant} if $F(X) = F(X \times
\Aone)$ for all $X \in Sm(k)$.
\end{defn}
\begin{defn}
We call $F \in Pre(Sm(k))$ \emph{unramified} if $F(X \coprod Y) = F(X) \times
F(Y)$, for all connected $X \in Sm(k)$ the canonical map $F(X) \to F(k(X))$ is
injective, and moreover
\[ F(X) = \bigcap_{x \in X^{(1)}} F(X_x) \]
(intersection in $F(k(X))$).
\end{defn}

\begin{rmk}
Suppose that $k$ is perfect and $F \in Pre(Sm(k))$ is a strictly homotopy
invariant sheaf of abelian groups. Then $F$ is unramified \cite[Lemma 6.4.4]{morel2005stable}.
\end{rmk}

In this section we are interested in the presheaf $Et_n \in Pre(Sm(k))$ which
assigns to $X \in Sm(k)$ the set of isomorphism classes of étale $X$-schemes,
everywhere of rank $n$. (This is neither homotopy invariant nor unramified, of
course.)

\begin{defn}
A rank $n$ versal étale scheme is an étale morphism $X \to Y$, everywhere of
rank $n$, such that if $T \to Spec(l)$ is any rank $n$ étale algebra over a
finitely generated field extension $l/k$, then $T$ is obtained from $X \to Y$ by
pullback along a morphism $Spec(l) \to Y$.
\end{defn}

We will make good use of the following result.

\begin{thm}[\cite{garibaldi2003cohomological}, Proposition 24.6(2)]
\label{thm:versal-torsor}
There exists a smooth $k$-scheme $X$, an irreducible divisor $\Delta
\subset \A^n$ and a finite étale morphism $p: X \to \mathbb{A}^n \setminus
\Delta$ of rank $n$, such that $p$ is versal.

Moreover, let $\eta$ be the generic point of $\mathbb{A}^n$, $L_\Delta^h$ the
Henselization of $\mathbb{A}^n$ in the generic point of $\Delta$,
and $\eta_h$ the generic point of
$L_\Delta^h$. The finite étale $\eta_h$-scheme $X_{\eta_h}$ splits as a disjoint union
$X_{\eta_h} = X_1 \coprod X_2$ with $X_1$ of rank two (unless $n=1$).
\end{thm}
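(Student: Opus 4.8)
The plan is to carry out the classical ``generic polynomial'' construction of a versal $S_n$-torsor and then read off its behaviour along the discriminant; this is essentially the argument of \cite[Section~24]{garibaldi2003cohomological}, so I only sketch it. Write $\mathbb{A}^n = Spec(k[a_1,\dots,a_n])$ and let $P(t) = t^n + a_1 t^{n-1} + \dots + a_n$ be the universal monic polynomial of degree $n$. Set $\widetilde X = \{P = 0\} \subset \mathbb{A}^n \times \mathbb{A}^1$, let $q \colon \widetilde X \to \mathbb{A}^n$ be the projection (finite flat of degree $n$), and let $\Delta \subset \mathbb{A}^n$ be the branch locus of $q$ --- the discriminant hypersurface $\{\operatorname{disc}(P) = 0\}$ when $\operatorname{char} k \neq 2$, the reduced branch locus in general. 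I would then take $X := q^{-1}(\mathbb{A}^n \setminus \Delta)$ and $p := q|_X$. With these definitions the first assertion is almost formal: $p$ is finite étale of rank $n$ by construction, and $X$ is smooth because it is finite étale over the smooth scheme $\mathbb{A}^n \setminus \Delta$.

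Two points then need genuine, if standard, input. First, that $\Delta$ is an irreducible divisor: by Zariski--Nagata purity the branch locus has pure codimension $1$, and $\Delta = q(R)$ for the ramification locus $R = \{P = 0,\ \partial_t P = 0\} \subset \widetilde X$. I would prove $R$ (geometrically) irreducible by observing that its projection to the $\mathbb{A}^1$-factor cuts out, over each point $b$, two affine-linear conditions on $(a_1,\dots,a_n)$ whose linear parts are independent for $n \geq 2$ (only $P$ involves $a_n$, and $\partial_t P$ has a nonzero linear part), hence fibres that are affine spaces of constant dimension, which forces $R$ irreducible; and $R \to \Delta$ is generically one-to-one because a generic polynomial with vanishing discriminant has a single repeated root and that root is a double root. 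Second, versality: given a rank $n$ étale algebra $T$ over a finitely generated field extension $l/k$, the primitive element theorem gives $T \cong l[t]/(f)$ for some monic separable $f$ of degree $n$ --- valid whenever $l$ is infinite, which is the only case relevant for strictly homotopy invariant sheaves, the finite-residue-field case being handled by the twisted-form argument of \cite{garibaldi2003cohomological} --- and then the coefficients of $f$ define a morphism $Spec(l) \to \mathbb{A}^n$ landing in $\mathbb{A}^n \setminus \Delta$ exactly because $f$ is separable, along which $X$ pulls back to $Spec(l[t]/(f)) = Spec(T)$.

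For the ``moreover'' I would first treat $\operatorname{char} k \neq 2$. The key is to understand $P$ over $L_\Delta^h$, i.e., over the Henselization of the (discrete valuation) local ring of $\mathbb{A}^n$ at the generic point of $\Delta$. I would identify the residue field $k(\Delta)$ through the birational parametrisation $(a,g) \mapsto (t-a)^2 g(t)$ of $\Delta$, with $a$ ranging over $\mathbb{A}^1$ and $g$ over monic separable polynomials of degree $n-2$ with $g(a) \neq 0$; this exhibits the reduction of $P$ modulo the maximal ideal of $L_\Delta^h$ as $(t-a)^2 g(t)$, with $g \in k(\Delta)[t]$ separable, irreducible of degree $n-2$, and coprime to $(t-a)^2$. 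Since $L_\Delta^h$ is henselian, Hensel's lemma lifts this coprime factorisation to $P = P_1 P_2$ over $L_\Delta^h$ with $P_1$ monic of degree $2$ reducing to $(t-a)^2$ and $P_2$ monic of degree $n-2$ reducing to $g$. Passing to the fraction field $\eta_h$ and setting $X_i = Spec(\eta_h[t]/(P_i))$ gives $X_{\eta_h} = X_1 \coprod X_2$ with $X_1$ of rank $2$ and $X_2$ of rank $n-2$; for $n = 2$ the factor $X_2$ is empty, and for $n = 1$ one has $\Delta = \emptyset$ so there is no such $\eta_h$, which is the stated exception.

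I expect the main obstacle to be making this last step work uniformly in every characteristic. In characteristic $2$ the naive discriminant is not the right invariant: the reduction of $P$ at the generic point of $\Delta$ is then not a literal square but an irreducible inseparable quadratic $t^2 - c$ (with $c$ a non-square in $k(\Delta)$) times a coprime separable factor of degree $n-2$, and it is this coprime factorisation that Hensel's lemma lifts; the degree-$2$ factor obtained upstairs still defines a rank-$2$ étale $\eta_h$-algebra. Checking these characteristic-$2$ modifications carefully --- and, more generally, confirming that the branch locus is cut out so that the rank-$2$ factor always splits off coprimely --- is the step where I would follow \cite[Section~24]{garibaldi2003cohomological} most closely; the remainder is routine.
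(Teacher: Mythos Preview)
Your construction of the versal cover and your treatment of irreducibility and versality are the same as the paper's (which simply cites \cite{garibaldi2003cohomological} for those facts), so the only substantive comparison is in the ``moreover'' clause. There your route genuinely differs from the paper's. You work directly over the henselian local ring $L_\Delta^h$: you identify the residue field $k(\Delta)$ via the birational parametrisation $(a,g)\mapsto (t-a)^2 g(t)$, read off the reduction $\bar P = (t-a)^2 g(t)$ as a coprime factorisation, and lift it by Hensel's lemma over $L_\Delta^h$ itself. The paper instead quotes \cite{garibaldi2003cohomological} for the analogous splitting over the \emph{completion} $L_\Delta^\wedge$ (where $Y_2$ is moreover unramified), descends the unramified factor along $L_\Delta^\wedge \to L_\Delta^h$ using the equivalence of finite \'etale covers of henselian local rings with the same residue field, and then invokes a separate lemma (Lemma~\ref{lemm:hensel-completion}) comparing $\Hom$-sets of \'etale algebras over $K$ and $\hat K$ to produce a retraction and hence the splitting over $\eta_h$.

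Your approach is more self-contained and avoids the detour through the completion and the auxiliary Lemma~\ref{lemm:hensel-completion}; the price is that you must redo the residue-field computation rather than cite it. The paper's approach is more modular: it isolates exactly what is needed from \cite{garibaldi2003cohomological} and adds only the henselian-versus-complete comparison, which is of independent interest. Two small remarks on your write-up: you do not need $g$ to be irreducible (only separable and coprime to $(t-a)^2$), so that claim can be dropped; and your aside that only infinite $l$ matter for strictly homotopy invariant sheaves is not quite right (finite residue fields do occur), though since you defer the finite-field versality to \cite{garibaldi2003cohomological} anyway this does not affect the argument.
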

\begin{proof}
Let us review the construction of $p$. Let $p_0: X_0 = Spec(k[x_1, \dots, x_n][t]/(t^n
+ x_1 t^{n-1} + \dots + x_n)) \to \A^n$ be the evident map. Let
$\Delta \subset \A^n$ be the branch locus of $p_0$; the reference proves that this is an
irreducible divisor. Since étale algebras over fields are simple, it is clear
that $X := X_0 \setminus p_0^{-1}(\Delta)$ is versal.

Let $L_\Delta^\wedge$ be the completion of $L^h_\Delta$ and write $\hat{\eta}$ for
the generic point of $L_\Delta^\wedge$. The reference proves that $X_{\hat \eta}
= Y_1 \coprod Y_2$, with $Y_1$ of rank two and $Y_2$ \emph{unramified}. What
this means is that there exists a finite étale morphism $Y_2' \to
L_\Delta^\wedge$ with $Y_2 \iso Y_2' \times_{L_\Delta^\wedge} \hat{\eta}$
\cite[beginning of Section 11, Proposition 24.2(3) and Definition
24.3]{garibaldi2003cohomological}. Since $L_\Delta^\wedge \to L_\Delta^h$ is a
morphism of henselian local rings inducing
an isomorphism on residue fields, it follows that there exists a finite étale
morphism $Y_2'' \to L_\Delta^h$ with $Y_2' \iso Y_2'' \times_{L_\Delta^h}
L_\Delta^\wedge$ \cite[Tag 04GK]{stacks-project}. Lemma
\ref{lemm:hensel-completion}(2) below then furnishes us with a retraction
$(Y_2'')_{\eta_h} \to X_{\eta_h} \to (Y_2'')_{\eta_h}$. Thus $X_{\eta_h}$ splits as
$X_1 \coprod (Y_2'')_{\eta_h}$, and $X_1$ must have the same degree as $Y_1$, i.e.
2. This concludes the proof.
\end{proof}

We used the following result, which is surely well-known.
\begin{lemm} \label{lemm:hensel-completion}
Let $R$ be a henselian DVR with completion $\hat R$, fraction field $K$ and
completed fraction field $\hat K$.
\begin{enumerate}
\item Let $A/K$ be an étale algebra with completion $\hat{A}/\hat{K}$. If $x \in
  \hat{A}$ satisfies a separable polynomial with coefficients in $A$, then $x
  \in A$.
\item Let $A, B$ be étale $K$-algebras. Then $\Hom_K(A, B) = \Hom_{\hat K}(\hat
  A, \hat B)$.
\end{enumerate}
\end{lemm}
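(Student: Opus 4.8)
The plan is to reduce both parts to the case of finite separable field extensions and then to invoke the basic structure theory of henselian valued fields. The key input is that, because $R$ is henselian, the valuation of $K$ extends uniquely to every finite extension of $K$; equivalently, for every finite separable $L/K$ the ring $L \otimes_K \hat K$ is again a \emph{field} (the completion of $L$ at the unique place over that of $K$), so that an irreducible separable polynomial over $K$ stays irreducible over $\hat K$. This is classical.

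For part (1): an étale $K$-algebra $A$ is a finite product $\prod_i K_i$ of finite separable field extensions, and by the remark above $\hat A \cong \prod_i \hat K_i$ with each $\hat K_i$ a field; each $K_i$ is again the fraction field of a henselian DVR, so the same irreducibility statement applies to $\hat K_i/K_i$. Writing $x = (x_i)$ and the given separable polynomial componentwise as $(f_i)$ with $f_i \in K_i[T]$ separable, it suffices to treat one factor, so we may assume $A = K$. Let $h \in K[T]$ be the minimal polynomial of $x$ over $K$; it divides the separable polynomial satisfied by $x$, hence is itself separable, so $h$ remains irreducible over $\hat K$ and $\hat K[T]/h \cong K(x) \otimes_K \hat K$ is a field. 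But evaluation at $x \in \hat K$ is a nonzero $\hat K$-algebra homomorphism $\hat K[T]/h \to \hat K$, hence injective, forcing $\deg h = 1$, i.e.\ $x \in K$.

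For part (2): the restriction map $\phi \mapsto \phi \otimes_K \hat K$ is injective since $\hat K$ is a faithfully flat $K$-module, so $A \hookrightarrow \hat A$ and equality of two homomorphisms can be checked after base change. For surjectivity, write $A = \prod_i K_i$ and $B = \prod_j L_j$ with the $K_i, L_j$ finite separable over $K$. Since the $\hat K_i$ and $\hat L_j$ are fields, $\hat A$ and $\hat B$ have the same idempotents as $A$ and $B$, so (using that an algebra homomorphism out of a finite product into a connected ring factors through a unique projection) both $\Hom_K(A,B)$ and $\Hom_{\hat K}(\hat A, \hat B)$ decompose as $\prod_j \coprod_i$ of the corresponding Hom-sets between field factors, compatibly with restriction. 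Hence it suffices to show $\Hom_K(K', L') \to \Hom_{\hat K}(\hat K', \hat L')$ is surjective for finite separable $K'/K$ and $L'/K$. Pick a primitive element $\theta$ for $K'/K$ with (separable) minimal polynomial $g \in K[T]$; a $\hat K$-algebra map $\hat K' \to \hat L'$ is determined by the image $x$ of $\theta$, which satisfies $g(x) = 0$. Regarding $L'$ as an étale $K$-algebra and $g \in K[T] \subseteq L'[T]$ as a separable polynomial, part (1) gives $x \in L'$; then $\theta \mapsto x$ defines a $K$-algebra homomorphism $K' = K[T]/(g) \to L'$ restricting to the given map.

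The part that requires care, and which I would isolate as a sub-claim, is that $L \otimes_K \hat K$ is a field for $R$ henselian and $L/K$ finite separable — equivalently, that the integral closure of $R$ in $L$ is local; this is where henselianness is genuinely used, and it follows from the fact that a finite algebra over a henselian local ring is a product of local rings (or, to sidestep finiteness of the integral closure, from the uniqueness of the extension of a henselian valuation). The remaining ingredients — the product decomposition of étale algebras, the idempotent bookkeeping, faithful flatness of $\hat K/K$ — are routine.
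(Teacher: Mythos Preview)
Your argument is correct. Part (2) is handled the same way as in the paper: write $A$ in terms of a primitive element and reduce surjectivity to part (1); your idempotent bookkeeping for the product decomposition of $B$ is more explicit than the paper's, but the content is identical.

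For part (1), after the common reduction to $A=K$ (via the fact that the integral closure of $R$ in each field factor is again a henselian DVR), you and the paper diverge. The paper clears denominators by multiplying $x$ by a power of the uniformizer to land in $\hat R$, and then invokes the characterization of henselian DVRs saying that an element of $\hat R$ satisfying a separable polynomial over $R$ already lies in $R$. You instead take the minimal polynomial $h$ of $x$ over $K$, note it is separable, and use that separable irreducible polynomials over $K$ remain irreducible over $\hat K$ (equivalently, $L\otimes_K\hat K$ is a field for $L/K$ finite separable) to force $\deg h=1$. Both inputs are standard reformulations of henselianness; yours has the advantage of avoiding the integrality step and the somewhat informal reference, while the paper's stays closer to the ring-theoretic definition. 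The sub-claim you isolate---that $L\otimes_K\hat K$ is a field---is indeed the crux, and your sketch via ``finite over henselian local $\Rightarrow$ product of local rings'' is exactly how the paper justifies the analogous reduction.
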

\begin{proof}
(1) We may assume that $A=L$ is a field. The normalization $R'$ of $R$ in $L$ is
finite \cite[Tag 032L]{stacks-project}, and hence $R'$, being a domain, is local henselian
\cite[Tag 04GH(1)]{stacks-project}. We may thus replace $R$ by $R'$ and assume
that $A = K$. Let $\pi$ be a uniformizer of $R$; then $\hat K = \hat R[1/\pi]$.
It follows that $\pi^n x \in R$ for $n$ sufficiently large and still satisfies a
separable polynomial; hence we may assume that $x \in R$. This reduced statement
is a well-known characterisation of henselian DVRs.\footnote{It is proved for
example here: \url{https://mathoverflow.net/q/105891}.}

(2) We may assume that $A=K[T]/P$, where $P$ is a separable polynomial. Then
$\Hom_{\hat K}(\hat A, \hat B)$ is the set of elements $t \in \hat B$ with $P(t)
= 0$. By (1), such $t$ lie in $B$. It follows that $\Hom_K(A, B) \to \Hom_{\hat
K}(\hat A, \hat B)$ is surjective. Injectivity is clear since $A \to \hat A$
etc. are all injective.
\end{proof}

\begin{lemm} \label{lemm:hensel}
Let $X$ be the localisation of a smooth scheme in a point of codimension one.
Write $X^h$ for the Henselization, $\eta \in X$ for the generic point and
$\eta^h$ for the generic point of $X^h$. If $F \in Pre(Sm(k))$ is a Nisnevich
sheaf, then the following diagram is cartesian:
\begin{equation*}
\begin{CD}
F(X) @>>> F(\eta) \\
@VVV      @VVV    \\
F(X^h) @>>> F(\eta^h).
\end{CD}
\end{equation*}
\end{lemm}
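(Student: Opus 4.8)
The plan is to reduce the statement to two standard facts: that a Nisnevich sheaf sends elementary distinguished (Nisnevich) squares to pullback squares, and that $F$ converts the cofiltered limits of schemes appearing here into filtered colimits (continuity, which is built into the extension of presheaves to essentially smooth schemes). Write $X = \mathrm{Spec}(\mathcal{O})$; the local ring of a smooth $k$-scheme at a codimension-one point is regular of dimension one, so $\mathcal{O}$ is a discrete valuation ring, $x$ its closed point, and $\eta$ the open complement of $\{x\}$. Recall $\mathcal{O}^h$ is the filtered colimit of the local rings of pointed étale neighbourhoods $(U_i, u_i) \to (X, x)$, so $F(X^h) = \colim_i F(U_i)$; moreover $\mathcal{O}^h$ is again a DVR with the same uniformiser, whence $X^h \times_X \eta = \eta^h$ and $F(\eta^h) = \colim_i F(U_i \times_X \eta)$.

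The first step is a cofinality observation: the étale neighbourhoods $(U, u) \to (X, x)$ with $U_x = \mathrm{Spec}\, k(x)$ (a single point, trivial residue extension) are cofinal. Indeed, given $(U,u)$, the point $u$ is open and closed in the finite $k(x)$-scheme $U_x$, so $U_x \setminus \{u\}$ is closed in $U$, and $V := U \setminus (U_x \setminus \{u\})$ is an open neighbourhood of $u$ with $V_x = \mathrm{Spec}\,k(x)$. For such a $V$, the commutative square of schemes with vertices $V \times_X \eta,\ V,\ \eta,\ X$ — built from the étale map $V \to X$ and the open immersion $\eta \hookrightarrow X$ complementary to the reduced point $\{x\}$ — is an elementary distinguished square, because $V \times_X \{x\} \to \{x\}$ is an isomorphism. (These are squares of essentially smooth schemes, so one applies $F$ through its extension to such schemes, using that the square is a cofiltered limit of elementary distinguished squares in $Sm(k)$; alternatively one first spreads the configuration out to an open neighbourhood of the codimension-one point inside an ambient smooth scheme. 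Either way nothing changes below.)

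The second step is Nisnevich descent: as $F$ is a Nisnevich sheaf it takes each such square to a pullback square, so
\[
F(X) = F(V) \times_{F(V \times_X \eta)} F(\eta).
\]
Now take the filtered colimit over the cofinal system of such $V$. Filtered colimits are exact, hence commute with the finite limit defining the pullback, and $F(\eta)$ is independent of $V$; therefore the right-hand side becomes $\big(\colim_V F(V)\big) \times_{\colim_V F(V \times_X \eta)} F(\eta) = F(X^h) \times_{F(\eta^h)} F(\eta)$ by the continuity identifications of the first paragraph, while the left-hand side is just $F(X)$. That is precisely the assertion that the square in the statement is cartesian.

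The main obstacle is nothing conceptual but rather the bookkeeping: verifying that the ``totally split over $x$'' étale neighbourhoods are cofinal and really yield honest elementary distinguished squares, checking $X^h \times_X \eta = \eta^h$ from elementary DVR facts, and handling throughout that the schemes are essentially smooth rather than smooth (so the extension of $F$, or a spreading-out argument, must be invoked). The conceptual inputs — the elementary-distinguished-square characterisation of Nisnevich sheaves and continuity of $F$ along affine cofiltered limits — are standard.
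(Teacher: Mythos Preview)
Your argument is correct and follows essentially the same route as the paper: exhibit each \'etale neighbourhood of the closed point as the top of an elementary distinguished Nisnevich square over $X$, apply $F$ to obtain cartesian squares, and pass to the filtered colimit using that filtered colimits commute with finite limits. You are more explicit than the paper about the cofinality of the ``totally split over $x$'' neighbourhoods and about the essentially-smooth bookkeeping, but the strategy is identical.
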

\begin{proof}
Let $X' \to X$ be an étale neighbourhood of the closed point, and $\eta'$ the
generic point of $X'$. Then
\begin{equation*}
\begin{CD}
\eta' @>>> X' \\
@VVV      @VVV \\
\eta  @>>> X
\end{CD}
\end{equation*}
is a distinguished Nisnevich square; hence applying $F$ yields a cartesian
square. Since $X^h$ is obtained as the filtered inverse limit of the $X'$ and
filtered colimits (of sets) commute with finite limits, the result follows.
\end{proof}

Recall that an étale algebra $A/k$ is called \emph{multiquadratic} if it is a
(finite) product of copies of $k$ and quadratic separable extensions of $k$.
\begin{corr} \label{corr:et-invariants}
Let $F \in Pre(Sm(k))$ be a homotopy invariant, unramified Nisnevich sheaf of
sets and $a: Et_n \to F$ any morphism (of presheaves of sets). Assume there
exists $* \in F(*)$ such that for any field $l/k$ and any multiquadratic étale
algebra $A/l$ we have $a(A) = *|_l$. Then for any $Y \in Sm(k)$ and any $A \in
Et_n(Y)$ we have $a(A) = *|_l$.
\end{corr}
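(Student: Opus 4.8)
The plan is to reduce to the versal étale algebra of Theorem~\ref{thm:versal-torsor} and then induct on $n$. \emph{First,} since $F$ is unramified the map $F(Y)\to\prod_i F(k(Y_i))$ over the connected components $Y_i$ is injective and $a$ commutes with restriction, so it suffices to prove $a(A)=*|_l$ for $l/k$ finitely generated and $A\in Et_n(l)$. For such $A$, versality of $p\colon X\to\A^n\setminus\Delta$ gives $\phi\colon Spec(l)\to\A^n\setminus\Delta$ with $A\cong\phi^*X$, and then $a(A)=\phi^*(a(X))$ by naturality of $a$. As $\A^n\setminus\Delta$ is integral with generic point $\eta$ (the generic point of $\A^n$) and $F$ is unramified, it therefore suffices to show $a(X_\eta)=*|_\eta$ in $F(k(x_1,\dots,x_n))$.

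\emph{The induction.} For $n\le 2$ every rank $n$ étale algebra over a field is multiquadratic, so the first reduction already closes the case. Suppose $n\ge 3$ and that the corollary holds in all ranks $<n$. Adjoining the split rank two algebra defines a morphism of presheaves $a'\colon Et_{n-2}\to F$, $a'(B)=a(B\amalg(\text{split rank }2))$, which vanishes on multiquadratics; so by the inductive hypothesis $a'(B)=*|_Z$ for every $Z\in Sm(k)$ and $B\in Et_{n-2}(Z)$.

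\emph{The local analysis along $\Delta$, and the constant.} Put $R=\mathcal O_{\A^n,\Delta}$ and $R^h=L_\Delta^h$, with fraction field $M^h$ (the residue field at $\eta_h$). Unramifiedness gives $F(\A^n)=F(\A^n\setminus\Delta)\cap F(R)$ inside $F(k(x_1,\dots,x_n))$, while homotopy invariance gives $F(\A^n)=F(k)$; hence once we know $a(X_\eta)\in F(R)$ we will have $a(X_\eta)=c|_\eta$ for a unique $c\in F(k)$, and by Lemma~\ref{lemm:hensel} the condition $a(X_\eta)\in F(R)$ is equivalent to $a(X_{\eta_h})$ lying in the image of $F(R^h)\to F(M^h)$. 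By Theorem~\ref{thm:versal-torsor} (and the explicit splitting constructed in its proof), $X_{\eta_h}=X_1\amalg X_2$ with $X_1$ a ramified, hence nonsplit, quadratic field extension of $M^h$ and $X_2=(\tilde X_2)_{M^h}$ for an étale rank $n-2$ algebra $\tilde X_2$ over $R^h$. Applying the previous paragraph to $\tilde X_2\amalg(\text{split }2)$ over $R^h$ shows $a(X_2\amalg(\text{split }2))=*|_{M^h}$, which lies in the image of $F(R^h)\to F(M^h)$; it thus remains to see that $a(X_1\amalg X_2)=a(X_{\eta_h})$ lies in that image as well. Granting this, $a(X_\eta)=c|_\eta$ with $c\in F(k)$, and to identify $c$ one base-changes to $k(T)$ — an operation injective on $F$ because $F(k)=F(\Aone_k)\hookrightarrow F(k(T))$ — and picks a $k(T)$-rational point $P$ of $\A^n_{k(T)}\setminus\Delta$ with split fibre $X_P$ (possible since $k(T)$ is infinite); then $c|_{k(T)}=c|_P=a(X)|_P=a(X_P)=*|_{k(T)}$, the last equality because $X_P$ is multiquadratic, so $c=*|_k$ and $a(X_\eta)=*|_\eta$.

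\emph{The obstacle.} The one remaining point — that $a(X_1\amalg X_2)$ lies in the image of $F(R^h)\to F(M^h)$ — is the crux, and is where homotopy invariance of $F$ and the local geometry of $\A^n$ near $\Delta$ must be combined carefully. The idea is to show that replacing the ramified quadratic summand $X_1$ by the split rank two algebra does not change $a$ modulo classes coming from $F(R^h)$: one seeks an $\Aone$-family of rank $n$ étale algebras interpolating between $X_1\amalg X_2$ and $(\text{split }2)\amalg X_2$ — supplied directly by Artin--Schreier theory when $\operatorname{char} k=2$, and in general built from the Nisnevich-local structure near $\Delta$, where $X_1$ becomes split after a ramified quadratic base change while $X_2$ stays étale (equivalently, $X_1\amalg X_2$ becomes multiquadratic over the unramified extension of $M^h$ that splits $X_2$) — and then homotopy invariance forces $a(X_1\amalg X_2)=a((\text{split }2)\amalg X_2)=*|_{M^h}$. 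This is the step I expect to require the most care, and it is exactly where the hypothesis that $a$ kills all multiquadratic algebras over all fields does its work.
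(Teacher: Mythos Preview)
Your reduction to the versal algebra and your use of Lemma~\ref{lemm:hensel} to pass to the henselization are correct and match the paper. The gap is precisely the step you label ``the obstacle.'' Because you defined the auxiliary invariant as $a'(B)=a(B\amalg(\text{split }2))$, induction only yields $a((\text{split }2)\amalg X_2)=*$, not $a(X_1\amalg X_2)=*$, and the proposed $\Aone$-interpolation between these two algebras does not exist: in characteristic $\ne 2$, rank-two \'etale algebras over $\Aone_{M^h}$ are classified by $H^1_\et(\Aone_{M^h},\ZZ/2)\cong (M^h)^\times/((M^h)^\times)^2$ (Kummer theory, using $\Gm(\Aone_{M^h})=(M^h)^\times$ and $\operatorname{Pic}(\Aone_{M^h})=0$), so every such algebra is constant along $\Aone$ and a nonsplit $X_1$ cannot be deformed to the split one. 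The Nisnevich-local and base-change maneuvers you sketch do not circumvent this.

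The paper's argument avoids the obstacle by a one-line change in the definition of $a'$: use the \emph{actual} rank-two factor rather than the split one. Working over the field $l=\eta_h$, set $a'(B)=a(B\times X_1)$ for $B\in Et_{n-2}$ over extensions of $l$. Since every rank-two \'etale algebra over a field is already multiquadratic, $B\times X_1$ is multiquadratic whenever $B$ is; hence $a'$ vanishes on multiquadratics and the induction hypothesis (applied with base field $l$) gives $a'\equiv *$. Then
\[
a(X_{\eta_h})=a(X_2\times X_1)=a'(X_2)=*|_{\eta_h},
\]
which trivially lies in the image of $F(R^h)$. No extension of $X_2$ over $R^h$ and no interpolation is needed. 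Your subsequent identification of the constant via a rational point with split fibre is correct, though more elaborate than necessary: once $a(X_\eta)=c|_\eta$, versality already gives $a(A)=c|_l$ for every $A/l$, so evaluating at any multiquadratic algebra forces $c=*$.
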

\begin{proof}
This is essentially the same as the proof of \cite[Theorem
24.4]{garibaldi2003cohomological}.

Since $F$ is unramified, it suffices to prove the claim when $Y$ is the spectrum
of a field. We proceed by induction on $n$. If $n\in \{1,2\}$ there is nothing
to do.

Suppose now that $l/k$ is a field, $A/l \in Et_n(l)$ and $A \approx A_1
\times A_2$ with $A_2$ multiquadratic (but non-zero). Define an invariant $a'$
of $Et_{n-2}$ over $l$ via
$a'(B) = a(B \times A_2)$. By assumption $a'(B) = *$ if $B$ is multiquadratic,
hence $a' = *$ by induction. We conclude that $a(A) = *$.

Now let $X \to \mathbb{A}^n \setminus \Delta$ be the versal morphism
from Theorem \ref{thm:versal-torsor}. We
consider $a(X_\eta) \in F(K)$, where $K = k(\mathbb{A}^n)$. I claim that if $x
\in \mathbb{A}^n$ is a point of codimension one, then $a(X_\eta)$ is in the
image of $F(\mathbb{A}^n_x) \to F(K)$. If $x \not\in \Delta$ this is clear, because
then $x \in \mathbb{A}^n \setminus \Delta$ and $a(X_\eta) = a(X_x)|_\eta$.
We thus need to deal with the case
where $x$ is the generic point of $\Delta$.
By Theorem  \ref{thm:versal-torsor}, $X_{\eta_h}$ splits off a quadratic
factor. Thus $a(X_\eta)|_{\eta_h} = a(X_{\eta_h}) = *$, by the previous step.
The claim now follows from Lemma \ref{lemm:hensel}.

Since $F$ is unramified, it follows that
$a(X_\eta) \in F(K)$ lies in the image of $F(\mathbb{A}^n) \to
F(K)$. But $F(\mathbb{A}^n) = F(*)$, and so there exists $a_0 \in F(*)$ such
that $F(X_\eta) = a_0|_K$. Since $X_\eta$ is versal this implies that $a(A/l) =
a_0|_l$ for any $l$ and any $A$. But $a(k/k) = *$, so $a_0 = *$. This concludes
the induction step.
\end{proof}

\section{Application: computing $\ul{\pi}_0^{\Aone} (B_\et \Fin^{bij})^+$}
Now let $k$ be a perfect field of characteristic not two.

We write $Shv_{Nis,\Aone}^{ab}(k)$ for the category of strictly homotopy
invariant Nisnevich sheaves (of abelian groups) on $Sm(k)$, $Pre^{mon}(k)$ for
the category of presheaves of monoids, with morphisms the morphisms of monoids. We
have obvious forgetful functors
\[ Shv_{Nis,\Aone}^{ab}(k) \to Pre^{mon}(k) \to Pre(Sm(k)). \]
We write $U: Shv_{Nis,\Aone}^{ab}(k) \to Pre(k)$, and
$U_{mon}: Shv_{Nis,\Aone}^{ab}(k) \to Pre^{mon}(k)$. Then the functors
$U_{mon}$ and $U$ have (potentially partially defined) left adjoints denoted
$F_{mon}$ and $F$. We make use of the following result of Morel.

\begin{thm}[\cite{A1-alg-top}, Theorem 3.46] \label{thm:morel}
The morphism of presheaves of sets $\Gm/2 \to \ul{GW}$, $[a] \mapsto \langle a
\rangle$ exhibits $\ul{GW}$ as $F(\Gm/2)$.
\end{thm}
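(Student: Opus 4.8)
The plan is to verify the universal property of $F(\Gm/2)$ directly: I must show that for every $M \in Shv_{Nis,\Aone}^{ab}(k)$, precomposition with $\langle - \rangle : \Gm/2 \to \ul{GW}$ induces a bijection
\[ \Hom_{Shv_{Nis,\Aone}^{ab}(k)}(\ul{GW}, M) \xrightarrow{\ \sim\ } \Hom_{Pre(Sm(k))}(\Gm/2, M). \]
Since $k$ is perfect, both $\ul{GW}$ and $M$ are unramified (by the Remark above), so a morphism out of either is determined by its sections over fields; this lets me argue field-by-field and then promote the result to a morphism of sheaves at the end. Injectivity is the easy direction: over a field $l/k$ the abelian group $GW(l)$ is generated by the classes $\langle a\rangle$, $a \in l^\times$, so two sheaf homomorphisms $\ul{GW}\to M$ that agree after composing with $\langle-\rangle$ agree on all field points, hence agree as maps of unramified sheaves.

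For surjectivity I would, given $\phi : \Gm/2 \to M$, define on each field $l/k$ a candidate $f_l : GW(l) \to M(l)$ by $\langle a\rangle \mapsto \phi_l([a])$, extended $\ZZ$-linearly from the generators. The only thing to check is that $f_l$ kills the defining relations of $GW(l)$. Factoring through $l^\times/2$ is automatic because $\phi$ is defined on $\Gm/2$, so in the standard additive presentation of the Grothendieck--Witt ring the sole remaining relation is the chain (Witt) relation $\langle a\rangle + \langle b\rangle = \langle a+b\rangle + \langle (a+b)ab\rangle$ for $a,b,a+b \in l^\times$. I would reduce this to a single universal instance: on the smooth irreducible $k$-scheme $U = \A^2 \setminus \big(\{a=0\}\cup\{b=0\}\cup\{a+b=0\}\big)$ the four functions $a,b,a+b,(a+b)ab$ are units, giving sections of $\Gm/2$ and an element
\[ R := \phi_U([a]) + \phi_U([b]) - \phi_U([a+b]) - \phi_U([(a+b)ab]) \in M(U). \]
By naturality, the relation at any $(l,a_0,b_0)$ is the pullback of $R$ along the classifying map $\mathrm{Spec}(l)\to U$, and since $M(U)\hookrightarrow M(k(U))$ by unramifiedness, it suffices to prove $R=0$, equivalently to prove the Witt relation for the generic pair $(a,b)$ over $k(U)$.

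The main obstacle is exactly this generic relation: one must show that \emph{any} natural transformation of set-valued presheaves $\Gm/2 \to M$ into a strictly $\Aone$-invariant sheaf automatically satisfies it. This is where homotopy invariance is indispensable, and I would invoke Morel's structure theory. A natural transformation $\Gm \to M$ is, by Yoneda, an element of $M(\Gm)$, and for strictly $\Aone$-invariant $M$ the contraction splitting $M(\Gm) \cong M(k) \oplus M_{-1}(k)$ together with its functoriality identifies $\Hom_{Pre(Sm(k))}(\Gm/2, M)$ in terms of $M(k)$ and the $\epsilon$-module $M_{-1}(k)$ (those transformations invariant under multiplication by squares). Comparing this with the corresponding description of $\Hom_{Shv_{Nis,\Aone}^{ab}(k)}(\ul{GW}, M)$, using $\ul{GW}=\ul{K}^{MW}_0$ and the contraction $(\ul{GW})_{-1}=\ul{W}$, shows that the data of a set map $\Gm/2\to M$ carries precisely enough to force the Witt relation. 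Equivalently, one can realize the relation as Morel's defining relation in $\ul{\pi}_0^{\Aone}(\tunit)=\ul{GW}$, produced by an explicit elementary-matrix $\Aone$-homotopy, and transport it through $\phi$.

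Granting the generic relation, each $f_l$ is a well-defined homomorphism, and the family $(f_l)$ is compatible with field extensions by naturality of $\phi$. A final routine step, using unramifiedness of $\ul{GW}$ and $M$ (compatibility with the residue/specialization structure, which is inherited from that of $\phi$), promotes $(f_l)$ to a morphism of sheaves $f:\ul{GW}\to M$ with $f\circ\langle-\rangle = \phi$. This exhibits the claimed bijection and hence identifies $\ul{GW}$ with $F(\Gm/2)$.
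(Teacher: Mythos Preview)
The paper does not prove this statement: it is quoted as a result of Morel with the citation \cite[Theorem 3.46]{A1-alg-top} and used as a black box in the proof of the subsequent proposition. There is therefore no argument in the paper to compare your attempt against.

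As a separate comment on your sketch: the overall shape is right (reduce to fields via unramifiedness, then verify the additive presentation of $GW(l)$), and you correctly isolate the one nontrivial point, the Witt relation $\phi([a])+\phi([b])=\phi([a+b])+\phi([(a+b)ab])$. But you do not actually establish it. The paragraph beginning ``I would invoke Morel's structure theory'' only names ingredients (the contraction splitting, the identification $\ul{GW}=\ul{K}^{MW}_0$, $(\ul{GW})_{-1}=\ul{W}$) and then asserts that ``the data \dots\ carries precisely enough to force the Witt relation'' without showing why; this sentence is exactly where the content of Morel's theorem sits. Your alternative, ``transport the elementary-matrix $\Aone$-homotopy through $\phi$'', presupposes that $\phi$ already comes from a map out of $\ul{GW}$, which is what you are trying to prove. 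So as written the proposal is an outline with the decisive step left as a promissory note rather than a proof.
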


Let $Et_* \in Pre^{mon}(k)$ denote the presheaf of monoids $X \mapsto \coprod_{n
\ge 0} Et_n(X)$; the monoidal operation is given by disjoint union of étale
schemes. For an étale algebra $A/l$, denote by $tr(A) \in \ul{GW}(l)$ the class
of its trace form. 
We shall now prove the result advertised in the heading, in the following guise.

\begin{prop}
Let $k$ be a perfect field of characteristic not two.

The morphism of presheaves of monoids $tr: Et_* \to \ul{GW}$, $A \mapsto tr(A)$
exhibits $\ul{GW}$ as $F_{mon}(Et_*)$.
\end{prop}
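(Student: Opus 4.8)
The plan is to verify the universal property of $F_{mon}(Et_*)$ directly: given a strictly homotopy invariant Nisnevich sheaf of abelian groups $M$ and a morphism of presheaves of monoids $\phi: Et_* \to U_{mon}(M)$, I must produce a unique morphism $\bar\phi: \ul{GW} \to M$ in $Shv_{Nis,\Aone}^{ab}(k)$ with $\bar\phi \circ tr = \phi$. The key observation is that $Et_*$, as a presheaf of monoids, contains the submonoid generated by the rank-one trivial algebra (giving $\NN$) and, after group completion, the virtual étale algebras should detect exactly $\ul{GW}$ via $tr$. So the first step is to restrict $\phi$ along the inclusion $\Gm/2 \hookrightarrow Et_2 \hookrightarrow Et_*$, $[a] \mapsto (l[t]/(t^2-a))$, to get a morphism of presheaves of \emph{sets} $\Gm/2 \to U(M)$; by Morel's Theorem~\ref{thm:morel} (Theorem~\ref{thm:morel}) this factors uniquely through a morphism $\psi: \ul{GW} \to M$ in $Shv_{Nis,\Aone}^{ab}(k)$, sending $\langle a\rangle \mapsto \phi(l[t]/(t^2-a))$. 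This $\psi$ is the only possible candidate for $\bar\phi$, since $\ul{GW}$ is generated as a sheaf (even as a presheaf of groups after $\Aone$-localization) by the classes $\langle a\rangle = tr(l[t]/(t^2-a))/$(something), and in any case $tr$ of a quadratic algebra is $\langle 1 \rangle + \langle a \rangle$ — wait, more precisely $tr(l[t]/(t^2-a)) = \langle 2 \rangle + \langle 2a \rangle$, which together with $tr(l) = \langle 1\rangle$ and rescaling by the unit $2$ still generates $\ul{GW}$. This forces uniqueness of $\bar\phi$.

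The substance is the \emph{existence} half: I must check $\psi \circ tr = \phi$ on all of $Et_*$, i.e. that for every $Y \in Sm(k)$ and every $A \in Et_n(Y)$, one has $\phi(A) = \psi(tr(A))$. This is exactly the kind of statement Corollary~\ref{corr:et-invariants} is designed to prove. For each fixed $n$, both $a(A) := \phi(A)$ and $b(A) := \psi(tr(A))$ are morphisms of presheaves of sets $Et_n \to U(M)$, where $U(M)$ is a homotopy invariant, unramified Nisnevich sheaf of sets (strictly homotopy invariant sheaves of abelian groups over a perfect field are unramified, by the Remark after the definition of unramified). Their difference $c(A) := a(A) - b(A) \in M$ (using the abelian group structure on $M$) is then another such morphism $Et_n \to U(M)$. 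To apply Corollary~\ref{corr:et-invariants} with $* = 0 \in M(*)$, I need: for every field $l/k$ and every multiquadratic étale $l$-algebra $A = l \times \dots \times l \times l[\sqrt{a_1}] \times \dots \times l[\sqrt{a_r}]$, the equality $c(A) = 0$, i.e. $\phi(A) = \psi(tr(A))$. Since $\phi$ is a monoid map and $tr$ is additive (trace form of a product is the sum), and $\psi$ is additive, this reduces to the two generating cases $A = l$ (where $\phi(l) = \psi(\langle 1 \rangle) = \psi(tr(l))$ by construction — this needs $\phi$ to send the monoidal unit to the neutral element, which is automatic for a monoid map, and $\psi(0) = 0$, so actually $\phi$ of the rank-$0$ algebra is $0$; the rank-one case $\phi(l)$ needs care) and $A = l[\sqrt a]$ (where $\phi(l[\sqrt a]) = \psi(tr(l[\sqrt a]))$ holds essentially by the \emph{definition} of $\psi$ via Morel's theorem, provided I've set up the normalization correctly). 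Handling the rank-one and rank-zero base cases carefully — matching the monoid unit and the generator $\langle 1 \rangle$ of $\ul{GW}$ — is where I must be most careful, and it may force me to phrase $\psi$ so that $\psi(tr(l)) = \phi(l)$ and $\psi(tr(l[\sqrt a])) = \phi(l[\sqrt a])$ simultaneously; since $tr(l[\sqrt a]) - tr(l) = \langle 2a \rangle + \langle 2 \rangle - \langle 1 \rangle$ and Morel's presentation gives $\ul{GW} = F(\Gm/2)$ freely on the symbols $\langle a \rangle$ modulo the relations defining $GW$, a short computation shows the assignment is consistent.

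Granting the multiquadratic base case, Corollary~\ref{corr:et-invariants} yields $c(A) = 0$ for all $Y \in Sm(k)$ and all $A \in Et_n(Y)$, for every $n$; hence $\phi(A) = \psi(tr(A))$ for all $A \in Et_*(Y)$, i.e. $\psi \circ tr = \phi$ as morphisms of presheaves of monoids. Combined with uniqueness from the first paragraph, this establishes that $tr: Et_* \to \ul{GW}$ exhibits $\ul{GW}$ as $F_{mon}(Et_*)$. The main obstacle, as indicated, is the bookkeeping in the low-rank base cases: ensuring the single morphism $\psi$ extracted from Morel's theorem is simultaneously compatible with the monoid unit (rank $0$), the rank-one algebra, and the quadratic algebras, so that the multiquadratic hypothesis of Corollary~\ref{corr:et-invariants} is genuinely satisfied rather than merely satisfied up to a constant.
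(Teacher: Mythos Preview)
Your overall architecture matches the paper's: build a map $\psi:\ul{GW}\to M$ out of $\phi$ via Morel's Theorem, then use Corollary~\ref{corr:et-invariants} to reduce the check $\psi\circ tr=\phi$ to multiquadratic algebras, hence to the rank-one and rank-two cases. But the step you flag as ``bookkeeping'' is in fact the entire content of the argument, and your candidate $\psi$ does not work.

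Concretely, your $\psi$ is induced from $[a]\mapsto \phi(l_a)$, so $\psi(\langle a\rangle)=\phi(l_a)$. Testing on the rank-one algebra gives
\[
\psi(tr(l))=\psi(\langle 1\rangle)=\phi(l_1)=\phi(l\times l)=2\phi(l),
\]
so $\psi\circ tr=\phi$ on $Et_1$ would force $\phi(l)=0$, which is false already for $\phi=tr$ itself. Likewise on $Et_2$ one gets $\psi(tr(l_a))=\psi(\langle 2\rangle+\langle 2a\rangle)=\phi(l_2)+\phi(l_{2a})$, which has no reason to equal $\phi(l_a)$. So your $\psi$ is not even a candidate for $\bar\phi$, and your uniqueness claim (``the only possible candidate'') is also off; uniqueness comes simply from the image of $tr$ generating $\ul{GW}$, not from this particular $\psi$.

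The paper fixes this with two genuine ideas you are missing. First, it normalizes the map out of $\Gm/2$ by setting
\[
t([a]):=\phi(X_{a/2})-\phi(X_2)+\phi(X),
\]
precisely so that $t([1])=\phi(l)$ (using $l_{1/2}\cong l_2$), which handles the rank-one case. Second, with this normalization the rank-two case reduces to proving $2\bigl(\phi(l_1)-\phi(l_2)\bigr)=0$, and this is obtained by a \emph{second} application of Morel's Theorem: the naive map $u:[a]\mapsto \phi(l_a)$ does induce some homomorphism $u':\ul{GW}\to M$, and the relation $2\langle 2\rangle=2\langle 1\rangle$ in $GW(k)$ then forces $2\phi(l_2)=2\phi(l_1)$. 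Neither of these steps is routine normalization; until you supply them, the multiquadratic base case of Corollary~\ref{corr:et-invariants} is not verified and the proof does not go through.
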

\begin{proof}
Let $\phi: Et_* \to U_{mon}F$ be any morphism, where $F \in
Shv_{Nis,\Aone}^{ab}(k)$ is arbitrary. For $a \in \mathcal{O}(X)^\times$ let
$X_a := X[t]/(t^2 - a)$. Since we are in characteristic not two, $X_a \to X$ is
étale. Define $t: \Gm/2 \to U F$ by mapping $a \in \mathcal{O}(X)^\times$ to
\[ t([a]) := \phi([X_{a/2}]) - \phi([X_2]) + \phi([X]) \in F(X). \]
(The reason for this formula is
that $tr([X_a]) = \langle 2 \rangle + \langle 2a \rangle$, and hence $tr([X_{a/2}])
- tr([X_2]) + tr([X]) = \langle a \rangle$.) By Theorem
\ref{thm:morel} this induces $t': \ul{GW} \to F$. I claim that the following
diagram commutes:
\begin{equation*}
\begin{CD}
Et_* @>\phi>>  F \\
@VtrVV        @| \\
\ul{GW} @>t'>> F \\
\end{CD}
\end{equation*}
By Corollary \ref{corr:et-invariants} it suffices to show this for
multi-quadratic algebras over fields $l$. But we are dealing with morphisms of
monoids into unramified sheaves, so it suffices to show this for $l/l \in
Et_1(l)$ and $l_a/l \in Et_2(l)$, where $l/k$ is a finitely generated field
extension. Now \[ t'(tr([l])) =
t'(\langle 1 \rangle) = t([1]) = \phi([l_{1/2}]) - \phi([l_2]) + \phi([l]) = \phi([l]) \]
since $l_2$ and $l_{1/2}$ are isomorphic. Finally \[ t'(tr([l_a])) = t'(\langle 2
\rangle + \langle 2a \rangle) = \phi([l_1]) - \phi([l_2]) + \phi([l]) +
\phi([l_a]) - \phi([l_2]) + \phi([l]) = \phi([l_a]) + 2(\phi([l_1]) -
\phi([l_2])) \] (note that $[l_1] = 2[l]$). Hence to prove the claim we need to show that $2(\phi([l_1]) -
\phi([l_2])) = 0$. Consider $u: \Gm/2 \to Et_*, a \mapsto [X_a]$. Then, applying
Theorem \ref{thm:morel} again, we get a commutative diagram
\begin{equation*}
\begin{CD}
\Gm/2 @>u>> Et_* \\
@VVV        @V{\phi}VV \\
\ul{GW} @>u'>> F.
\end{CD}
\end{equation*}
Since $2 \langle 2 \rangle = 2 \in GW(k)$ (indeed $2x^2 + 2y^2 = (x+y)^2 +
(x-y)^2$) and $\phi([l_a]) = \phi(u(a)) = u'(\langle a \rangle)$, this proves
the claim.

Consequently we have proved that any morphism $\phi$ factors through $tr: Et_*
\to \ul{GW}$. Since the image of $tr$ generates $\ul{GW}$ (as an unramified
sheaf of abelian groups), this factorization is unique. This concludes the
proof.
\end{proof}

\bibliographystyle{plainc}
\bibliography{bibliography}

\begin{thebibliography}{1}

\bibitem{garibaldi2003cohomological}
Skip Garibaldi, Alexander Merkurjev, and Jean~Pierre Serre.
\newblock {\em Cohomological invariants in Galois cohomology}.
\newblock Number~28. American Mathematical Soc., 2003.

\bibitem{hirsch2018cohomological}
Christian Hirsch.
\newblock Cohomological invariants of reflection groups.
\newblock Master's thesis, LMU Munich, 2009.

\bibitem{morel2005stable}
Fabien Morel.
\newblock The stable $\mathbb{A}^1$-connectivity theorems.
\newblock {\em K-theory}, 35(1):1--68, 2005.

\bibitem{A1-alg-top}
Fabien Morel.
\newblock {\em $\mathbb{A}^1$-Algebraic Topology over a Field}.
\newblock Lecture Notes in Mathematics. Springer Berlin Heidelberg, 2012.

\bibitem{stacks-project}
The {Stacks Project Authors}.
\newblock {\itshape Stacks Project}.
\newblock \url{http://stacks.math.columbia.edu}, 2018.

\end{thebibliography}

\end{document}